\newtheorem{lemma}{Lemma}
\newtheorem{corollary}{Corollary}
\newtheorem{example}{Example}
\newtheorem{proposition}{Proposition}
\newcommand{\PAF}{\operatorname{PAF}}
\newcommand{\PSD}{\operatorname{PSD}}
\newcommand{\DFT}{\operatorname{DFT}}
\renewcommand{\mod}{\mathrm{mod}\;}
\newcommand{\pam}{\widehat{A}_m}
\newcommand{\pbm}{\widehat{B}_m}
\newcommand{\pa}[1]{\widehat{A}_{#1}}
\newcommand{\pb}[1]{\widehat{B}_{#1}}
\newcommand{\Z}{\mathbb{Z}}
\title{Legendre pairs of lengths $\ell \equiv 0 \, (\mod 3)$}
\author{Ilias Kotsireas\thanks{Corresponding author. Supported by an NSERC grant.}  $\,\,$ \& $\,\,$
Christoph Koutschan\thanks{Supported by the Austrian Science Fund (FWF): F5011-N15.\newline
This paper has been published in Journal of Combinatorial Designs 29:870--887,
DOI: 10.1002/jcd.21806}}
\date{\today}
\begin{document}

%\begin{titlepage}
\maketitle
%\end{titlepage}

\begin{abstract}
We prove a proposition that connects constant-PAF sequences and the corresponding Legendre pairs with integer PSD values.
We show how to determine explicitly the complete spectrum of the $({\ell}/{3})$-rd value of the discrete Fourier transform for Legendre pairs of lengths $\ell \equiv 0 \, (\mod 3)$. This is accomplished by two new algorithms based on number-theoretic arguments. As an application, we prove that Legendre pairs of the open lengths $117$, $129$, $133$, and~$147$ exist by finding Legendre pairs of these lengths with a multiplier group of order at least~$3$. As a consequence, $85$, $87$, $115$, $145$, $159$, $161$, $169$, $175$, $177$, $185$, $187$, $195$ are the twelve integers in the range $< 200$ for which the question of existence of Legendre pairs remains unsolved. 
\end{abstract}

\noindent
\textbf{Keywords:} Legendre pairs, Discrete Fourier Transform, Compression, Hadamard matrix

\section{Introduction}

Let $A$ denote a finite sequence $A = [a_1,\ldots,a_{\ell}]$ of length $\ell$. 

\noindent The periodic autocorrelation function (PAF) of $A$ at lag $s$ is defined as
\begin{equation}
    \PAF(A,s) = \displaystyle\sum_{i=1}^{\ell} a_i\, a_{i+s}, \quad \forall \,\, s = 0, \ldots, \ell-1,
    \label{definition:PAF}
\end{equation}
where $i+s$ is taken modulo $\ell$, when $i+s>\ell$. 

\noindent The discrete Fourier transform (DFT) of $A$ at lag $s$ is defined as
\begin{equation}
    \DFT(A,s) = \displaystyle\sum_{i=1}^{\ell} a_i\, \omega^{s \cdot (i-1)}, \quad \forall \,\, s = 1, \ldots, \ell,
    \label{definition:DFT}
\end{equation}
where $\omega = \cos\left({2\pi}/{\ell}\right) + i \sin\left({2\pi}/{\ell}\right)$ is the primitive $\ell$-th root of unity, that satisfies $\omega^{\ell} =1$. 

\noindent The power spectral density (PSD) of $A$ at lag $s$ is defined as 
\begin{equation}
    \PSD(A,s) = \bigl|\DFT(A,s)\bigr|^2 = \Re(\DFT(A,s))^2 + \Im(\DFT(A,s))^2, \quad \forall \,\, s = 1, \ldots, \ell
    \label{definition:PSD}
\end{equation}
i.e., the PSD values are defined as the sum of squares of the real and imaginary parts of the DFT values.

Let $\ell$ be an odd positive integer. Two sequences $A = [a_1,\ldots,a_\ell]$ and $B = [b_1,\ldots,b_\ell]$ of length~$\ell$ and consisting of elements from $\{-1,+1\}$, such that
$a_1 + \ldots + a_\ell = b_1 + \ldots + b_\ell = \pm 1$
form a Legendre pair of length $\ell$ if 
\begin{equation}
    \PAF(A,s) + \PAF(B,s) = -2, \quad \forall \,\, s = 1,\ldots,\frac{\ell-1}{2} .
    \label{PAF_minus_2}
\end{equation}
In the context of Legendre pairs, we typically work with the sole assumption that $a_1 + \ldots + a_\ell = 1$ and $b_1 + \ldots + b_\ell = 1$, without loss of generality. It is well-known, see \cite{FGS:2001}, that if $(A,B)$ form a Legendre pair of length $\ell$, then we have
\begin{equation}
    \PSD(A,s) + \PSD(B,s) = 2\ell + 2, \quad \forall \,\, s = 1,\ldots,\frac{\ell-1}{2} . 
    \label{PSD_2ell_plus2}
\end{equation}

The paper~\cite{FGS:2001} is fundamental in the study of Legendre pairs, 
as it initiated the use of the PSD criterion, in the search for Legendre pairs. 
More specifically, the PSD criterion asserts that if, in the course of a search algorithm, an index $i$ in the range $1,\ldots,(\ell-1)/{2}$ is detected, such that $\PSD(A,i) > 2\ell + 2$, then the corresponding (candidate) sequence~$A$ can be discarded from the search, because it is unsuitable to form a Legendre pair. This is due to the fact that the PSD values are always non-negative, as sums of norm squares. Given a Legendre pair of length $\ell$, one can construct a Hadamard matrix of order $2\ell + 2$, using a two circulant core template array found in~\cite{FGS:2001}. 

Throughout this paper, we use the notation $\Z_{\ell}^{\star}$ to denote the multiplicative group $\{j\in\Z_{\ell}\mid\gcd(j,\ell)=1\}$. Let $I\subseteq\Z_{\ell}$, then an element $t\in\Z_{\ell}^{\star}$ is called a multiplier of~$I$ if there exists $g\in\Z_{\ell}$ such that
\[
  t\cdot I = I + g,
\]
where $I+g:=\{i+g\mid i\in I\}$ and analogously for $t\cdot I$. We say that $t$ is a multiplier for a sequence $A=[a_1,\dots,a_{\ell}]\in\{-1,+1\}^{\ell}$ if it is a multiplier of $I:=\{i\in\Z_{\ell}\mid a_i=1\}$. See \cite{JT:2021_JACO} for more details. In this paper, we restrict our searches for Legendre pairs to sequences whose group of multipliers contains a prespecified subgroup of $\Z_{\ell}^{\star}$, also known as the union-of-orbits approach. In most instances considered here, we specify a subgroup of size~$3$, so that the search space is neither too restrictive causing no Legendre pairs to be found, nor too large causing the algorithm to get stuck in parts of the search space that contain no Legendre pairs given the current computational resources.

The rest of the paper is organized as follows. In Section~\ref{sec:0mod3}, we present some theoretical results on the possible PSD values of sequences in Legendre pairs, under the assumption that their length~$\ell$ is divisible by~$3$. These results are then exploited in Section~\ref{sec:comp}, where we use them as additional filter criteria in order to speed up our exhaustive searches for Legendre pairs. With the help of considerable computational resources, we succeeded to find Legendre pairs of lengths $\ell=117$, $\ell=129$, and $\ell=147$. It was unknown until now whether Legendre pairs of these lengths existed or not (see Sections \ref{sec:117} -- \ref{sec:147}). As an encore, in Section~\ref{sec:133} we hint at the possibility of extending our ideas to lengths~$\ell$ that are not necessarily divisible by~$3$ but by some other small prime, and for the first time present some examples of Legendre pairs of length $\ell=133$.

\section{Legendre pairs of length $\ell \equiv 0 \, (\mod 3)$}
\label{sec:0mod3}

Consider $(A,B)$ a Legendre pair of length $\ell$ such that $\ell \equiv 0 \, (\mod 3)$ and set $m={\ell}/{3}$. The following lemma is proved in \cite{AnnComb_2009_KKS}

\begin{lemma}
Let $\ell$ be an odd integer such that $\ell \equiv 0 \, (\mod 3)$
and let $m = {\ell}/{3}$. Let $A = [a_1,\ldots,a_\ell]$ be a $\{ -1, +1 \}$-sequence. Then 
$$
    \DFT(A,m) =
    \left(A_1 -\frac{1}{2} A_2 -\frac{1}{2} A_3\right)
    +
    \left(\frac{\sqrt{3}}{2} A_2 - \frac{\sqrt{3}}{2} A_3\right) i,
$$
$$
    \PSD(A,m) =
    A_1^2 + A_2^2 + A_3^2 - A_1 A_2 - A_1 A_3 - A_2 A_3,
$$
where
$$
    A_1 = \sum_{i=0}^{m-1} a_{3i+1}, \,\,\,
    A_2 = \sum_{i=0}^{m-1} a_{3i+2}, \,\,\,
    A_3 = \sum_{i=0}^{m-1} a_{3i+3}.
$$
\label{Lemma:PSD_length_divisible_by_three}
\end{lemma}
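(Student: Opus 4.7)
The plan is to reduce the DFT computation to a sum over cube roots of unity. Since $\ell=3m$ and $\omega=e^{2\pi i/\ell}$, the element $\zeta:=\omega^{m}=e^{2\pi i/3}$ is a primitive cube root of unity, so $\zeta=-\tfrac12+\tfrac{\sqrt3}{2}i$ and $\zeta^2=-\tfrac12-\tfrac{\sqrt3}{2}i$. I would start from the definition and write
\[
  \DFT(A,m)=\sum_{i=1}^{\ell}a_i\,\omega^{m(i-1)}=\sum_{i=1}^{\ell}a_i\,\zeta^{i-1}.
\]
Then I would partition the index set $\{1,\dots,\ell\}$ into the three residue classes modulo~$3$, writing $i=3j+1$, $i=3j+2$, or $i=3j+3$ with $j=0,\dots,m-1$. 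The exponent $i-1$ is congruent to $0,1,2\pmod 3$ respectively, so $\zeta^{i-1}$ takes the values $1,\zeta,\zeta^{2}$ on these three classes. Collecting terms yields
\[
  \DFT(A,m)=A_1+A_2\,\zeta+A_3\,\zeta^{2}.
\]

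The first formula of the lemma then follows by substituting the explicit real and imaginary parts of $\zeta$ and $\zeta^{2}$ and grouping:
\[
  \DFT(A,m)=\Bigl(A_1-\tfrac12 A_2-\tfrac12 A_3\Bigr)+\Bigl(\tfrac{\sqrt3}{2}A_2-\tfrac{\sqrt3}{2}A_3\Bigr)i.
\]
For the PSD formula I would compute $\PSD(A,m)=\Re(\DFT(A,m))^{2}+\Im(\DFT(A,m))^{2}$, i.e.\ $(A_1-\tfrac12 A_2-\tfrac12 A_3)^{2}+\tfrac34(A_2-A_3)^{2}$, expand both squares, and collect like terms; the fractional coefficients of $A_2^{2},A_3^{2},A_2A_3$ combine into integers, and the cross-terms involving $A_1$ stay as they are, producing exactly
\[
  A_1^{2}+A_2^{2}+A_3^{2}-A_1A_2-A_1A_3-A_2A_3.
\]
Alternatively, one can avoid coordinates by using $|A_1+A_2\zeta+A_3\zeta^{2}|^{2}=(A_1+A_2\zeta+A_3\zeta^{2})(A_1+A_2\bar\zeta+A_3\bar\zeta^{2})$ together with $\zeta+\bar\zeta=-1$ and $\zeta\bar\zeta=1$, which immediately produces the quadratic form above.

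There is no real obstacle here; the only thing to be careful about is the bookkeeping in the expansion of the PSD, in particular that the $A_2A_3$ contributions from $(A_1-\tfrac12A_2-\tfrac12A_3)^{2}$ and from $\tfrac34(A_2-A_3)^{2}$ combine with the correct sign to give $-A_2A_3$. The conceptual content of the lemma lies entirely in the observation that $\omega^{m}$ is a primitive cube root of unity, which makes the $m$-th DFT coefficient depend only on the three partial sums $A_1,A_2,A_3$.
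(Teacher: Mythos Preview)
Your proof is correct and matches the approach indicated in the paper, which does not spell out the argument but simply notes that it ``is based on the exact evaluation of the roots of the cyclotomic polynomial of degree~$3$'' and refers to \cite{AnnComb_2009_KKS}. Your write-up is exactly that: recognize $\omega^{m}=e^{2\pi i/3}$ as a primitive cube root of unity, split the DFT sum by residue classes modulo~$3$, and then expand $|A_1+A_2\zeta+A_3\zeta^{2}|^{2}$.
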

The proof of Lemma~\ref{Lemma:PSD_length_divisible_by_three} is based on the exact evaluation of the roots of the cyclotomic polynomial of degree $3$.

\noindent 
Let $\mathcal{A} = \{ a_1, \ldots, a_{\ell} \}$, and 
\[
  e_2(\mathcal{A}) = \sum_{i < j} a_i a_j
\]
denote the second elementary symmetric function on~$\mathcal{A}$. Let
\[
  p_1(\mathcal{A}) = \sum_{i=1}^{\ell} a_i
  \quad\text{and}\quad
  p_2(\mathcal{A}) = \displaystyle\sum_{i=1}^{\ell} a_i^2
\]
denote the first and second power sums on~$\mathcal{A}$.
The following special case of the Jacobi-Trudi identity
\begin{equation}
                    e_2(\mathcal{A}) = \frac{p_1(\mathcal{A})^2}{2} -\frac{p_2(\mathcal{A})}{2}
\label{Jacobi-Trudi}
\end{equation}
can be found in~\cite{Strurmfels:2008}.

Applying Lemma~\ref{Lemma:PSD_length_divisible_by_three} to 
a Legendre pair $(A,B)$ of length $\ell$ such that $\ell \equiv 0 \, (\mod 3)$, we obtain the following:

\begin{corollary}
If $\ell \equiv 0 \, (\mod 3)$, $m={\ell}/{3}$, and if the two $\{-1,+1\}$-sequences $A=[a_1,\ldots,a_\ell]$ and
$B=[b_1,\ldots,b_\ell]$ form a Legendre pair of length $\ell$, then
\begin{equation}
\left\{
\begin{array}{l}
\PSD(A,m) = \displaystyle\frac{3}{2} \left( A_1^2 +
A_2^2 + A_3^2 \right) -\displaystyle\frac{1}{2} \\
  \\
\PSD(B,m) = \displaystyle\frac{3}{2} \left( B_1^2 +
B_2^2 + B_3^2 \right) -\displaystyle\frac{1}{2}
\end{array}
\right. \label{PSD-values-pam-pbm}
\end{equation}
\begin{equation}
    A_1^2 + A_2^2 + A_3^2 + B_1^2 + B_2^2 + B_3^2 = 4m+2
    \label{Six_Squares_Diophantine_Equation_eq_4mp2}
\end{equation}
\label{Corollary-PSD-values}
\end{corollary}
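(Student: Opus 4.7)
The plan is to reduce both claims to the identity in Lemma~\ref{Lemma:PSD_length_divisible_by_three} combined with the two global constraints on a Legendre pair: that the entries of $A$ and $B$ sum to $1$, and that $\PSD(A,s)+\PSD(B,s)=2\ell+2$ holds at $s=m$ by~(\ref{PSD_2ell_plus2}).

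\textbf{Step 1: Rewrite $\PSD(A,m)$ as a multiple of $A_1^2+A_2^2+A_3^2$.}
Since $a_1+\cdots+a_\ell=1$, partitioning the index set modulo~$3$ yields $A_1+A_2+A_3=1$. I would then apply the Jacobi--Trudi identity~(\ref{Jacobi-Trudi}) to the three-element multiset $\{A_1,A_2,A_3\}$: with $p_1=1$ and $p_2=A_1^2+A_2^2+A_3^2$, it gives $A_1A_2+A_1A_3+A_2A_3=(1-p_2)/2$. Substituting this into the formula $\PSD(A,m)=A_1^2+A_2^2+A_3^2-A_1A_2-A_1A_3-A_2A_3$ from Lemma~\ref{Lemma:PSD_length_divisible_by_three} produces $\PSD(A,m)=p_2-(1-p_2)/2=\tfrac{3}{2}(A_1^2+A_2^2+A_3^2)-\tfrac{1}{2}$. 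The analogous derivation with $B_1+B_2+B_3=1$ gives the second equation of~(\ref{PSD-values-pam-pbm}). This settles the first claim.

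\textbf{Step 2: Derive the six-squares Diophantine equation.}
Summing the two equations of~(\ref{PSD-values-pam-pbm}) yields
\[
  \PSD(A,m)+\PSD(B,m)=\tfrac{3}{2}\bigl(A_1^2+A_2^2+A_3^2+B_1^2+B_2^2+B_3^2\bigr)-1.
\]
By the PSD identity~(\ref{PSD_2ell_plus2}) applied at $s=m$, the left-hand side equals $2\ell+2=6m+2$. Solving for the bracketed quantity gives $A_1^2+A_2^2+A_3^2+B_1^2+B_2^2+B_3^2=4m+2$, which is~(\ref{Six_Squares_Diophantine_Equation_eq_4mp2}).

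\textbf{Anticipated difficulty.}
There is no serious obstacle here; the proof is a direct chain of substitutions. The only point requiring a moment of care is recognizing that the normalization assumption stated in the introduction, namely $a_1+\cdots+a_\ell=b_1+\cdots+b_\ell=1$, is exactly what feeds the Jacobi--Trudi identity with $p_1=1$; without that normalization the constant $-\tfrac{1}{2}$ in~(\ref{PSD-values-pam-pbm}) and the constant $+2$ in~(\ref{Six_Squares_Diophantine_Equation_eq_4mp2}) would change. It is worth noting this explicitly in the write-up, since~(\ref{PAF_minus_2}) and~(\ref{PSD_2ell_plus2}) are invariant under replacing $A$ by $-A$ while~(\ref{PSD-values-pam-pbm}) is not.
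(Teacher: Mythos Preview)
Your proof is correct and follows essentially the same route as the paper: apply Lemma~\ref{Lemma:PSD_length_divisible_by_three}, use the Jacobi--Trudi identity with $p_1=A_1+A_2+A_3=1$ to eliminate the mixed terms, and then combine the two resulting formulas with~(\ref{PSD_2ell_plus2}) to obtain~(\ref{Six_Squares_Diophantine_Equation_eq_4mp2}). One small correction to your closing remark: since~(\ref{PSD-values-pam-pbm}) involves only the squares $A_i^2$, it \emph{is} invariant under $A\mapsto -A$ (indeed Jacobi--Trudi uses only $p_1^2$, which equals~$1$ for either sign choice), so the normalization to $+1$ is a convenience rather than a necessity for these particular formulas.
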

where
\begin{alignat*}{3}
  A_1 &= \sum_{i=0}^{m-1} a_{3i+1}, \quad&
  A_2 &= \sum_{i=0}^{m-1} a_{3i+2}, \quad&
  A_3 &= \sum_{i=0}^{m-1} a_{3i+3},
  \\[1ex]
  B_1 &= \sum_{i=0}^{m-1} b_{3i+1}, &
  B_2 &= \sum_{i=0}^{m-1} b_{3i+2}, &
  B_3 &= \sum_{i=0}^{m-1} b_{3i+3}.
\end{alignat*}
\begin{proof}
Applying Lemma~\ref{Lemma:PSD_length_divisible_by_three} to the sequences $A$, $B$ separately we obtain
\begin{align*} 
    \PSD(A,m) &=
    A_1^2 + A_2^2 + A_3^2 - \underbrace{(A_1 A_2 + A_1 A_3 + A_2 A_3)}_{e_2(A_1,A_2,A_3)},
  \\
    \PSD(B,m) &=
    B_1^2 + B_2^2 + B_3^2 - \underbrace{(B_1 B_2 + B_1 B_3 + B_2 B_3)}_{e_2(B_1,B_2,B_3)}.
\end{align*}
The second elementary symmetric functions $e_2(A_1,A_2,A_3)$ and $e_2(B_1,B_2,B_3)$ are related to the first elementary symmetric functions $e_1(A_1,A_2,A_3)$ and $e_1(B_1,B_2,B_3)$ via the special case of the Jacobi-Trudi identity (\ref{Jacobi-Trudi}). We also know that 
$e_1(A_1,A_2,A_3) = A_1 + A_2 + A_3 = 1$ and $e_1(B_1,B_2,B_3) = B_1 + B_2 + B_3 = 1$. Therefore we obtain (\ref{PSD-values-pam-pbm}), and
\begin{align*}
  A_1^2 + A_2^2 + A_3^2 + B_1^2 + B_2^2 + B_3^2 &=
  \frac{2\PSD(A,m)+1}{3} + \frac{2\PSD(B,m)+1}{3} \\[1ex]
  &= \frac{2(2\ell+2)+2}{3} = \frac{4\ell+6}{3} = 4m+2.\qedhere
\end{align*}
\end{proof}

In the sequel, we denote $\PSD(A,m)$ by $\pam$ and $\PSD(B,m)$ by $\pbm$. 
% As an additional consistency verification pertaining to 
% Corollary~\ref{Corollary-PSD-values}, one can verify that:
% \begin{align*}
%  \pam + \pbm &= \frac{3}{2} \left( A_1^2 + A_2^2 + A_3^2 \right) %-\frac{1}{2}
%  + \frac{3}{2} \left( B_1^2 + B_2^2 + B_3^2 \right) - \frac{1}{2} \\
%  &= \frac{3}{2}(4m+2)-1
%  =\frac{3}{2} \left(4 \frac{\ell}{3} + 2 \right) -1
%  = 2\ell +3-1 = 2\ell + 2.
% \end{align*}

Corollary~\ref{Corollary-PSD-values} can be used to derive additional decoupled constraints (i.e., involving $A_i$ and $B_i$ separately) based on (\ref{Six_Squares_Diophantine_Equation_eq_4mp2}). From~(\ref{PSD_2ell_plus2})
we know:
\begin{equation}
    \pam + \pbm = 2\ell+2.
    \label{Equation: pam-pbm-eq-2n-2}
\end{equation}
Moreover, from (\ref{PSD-values-pam-pbm}) we obtain:
\begin{equation}
    A_1^2 + A_2^2 + A_3^2 = \frac{2\pam+1}{3}
    \quad\text{and}\quad
    B_1^2 + B_2^2 + B_3^2 = \frac{2\pbm+1}{3}.
\label{Equations:3squares}
\end{equation}
Since both these sums of three odd integer squares are integers,
we obtain that $2\pam + 1 \equiv 9\, (\mod 24)$ and
$2\pbm + 1 \equiv 9\, (\mod 24)$
i.e.
$\pam \equiv 4\, (\mod 12)$ and 
$\pbm \equiv 4\, (\mod 12)$.
Therefore, the set of possible pairs of values $\bigl[\pam,\pbm\bigr]$ can be restricted considerably. In addition, a possible pair of values $\bigl[\pam,\pbm\bigr]$ has to
be compatible with the linear constraints
\begin{equation}
    A_1 + A_2 + A_3 = 1 \quad\text{and}\quad B_1 + B_2 + B_3 = 1.
\label{LinearConstraints}
\end{equation}
For a Legendre pair $(A,B)$ of length $\ell$, we must have that $A_1,A_2,A_3,B_1,B_2,B_3$ are all odd. For given fixed values of $\pam$, $\pbm$, equations (\ref{Equations:3squares}) can be solved independently as sums-of-squares Diophantine equations and typically have anywhere from $1$ to $5$ all-odd solutions (up to sign), for the right-hand-side values that arise in the context of Legendre pairs of lengths $\ell < 200$. 
These solutions give possible triplets of values for $(A_1,A_2,A_3)$ and $(B_1,B_2,B_3)$ that must be compatible with the linear constraints
(\ref{LinearConstraints}). 
The above discussion suffices to formulate an algorithm for determining explicitly the complete spectrum of the $({\ell}/{3})$-rd PSD values for any Legendre pair of length $\ell$ divisible by three. We outline this algorithm below.
\bigskip

\begin{algorithm}[H]
\SetAlgoLined
% \KwResult{Write here the result }
Input: An odd positive integer $\ell = 3 \cdot m$ \;  
Initialization: ${\cal S} = \{\}$  \; 
 \For{$s=0,\ldots,\lfloor(\ell-1)/6\rfloor$}{
   (1) form the candidate $\bigl[\pam,\pbm\bigr]$ pair $[12s + 4, 2\ell + 2 - (12s + 4) ]$ \;
   (2) compute the values of $A_1^2 + A_2^2 + A_3^2$ and $B_1^2 + B_2^2 + B_3^2$ using (\ref{Equations:3squares}) \;
   (3) solve (up to sign) the two 
   sum-of-squares Diophantine equations\\[1ex]
    $\qquad A_1^2 + A_2^2 + A_3^2 = \dfrac{2 (12s + 4) + 1}{3}$, \\[1ex]
    $\qquad B_1^2 + B_2^2 + B_3^2 = \dfrac{2(2\ell + 2 - (12s + 4)) + 1}{3}$ \;
   \vspace{1ex}
  \eIf{there are all-odd solutions of these two Diophantine equations, compatible with the linear constraints (\ref{LinearConstraints})  }{
   insert the pair $[12s + 4, 2\ell + 2 - (12s + 4)]$ in ${\cal S}$, as an element of the spectrum of $\bigl[\pam,\pbm\bigr]$ \;
   }{
   discard the pair $[12s + 4, 2\ell + 2 - (12s + 4)]$ as it cannot be an element of the spectrum of $\bigl[\pam,\pbm\bigr]$ \;
  }
 }
 Output: the spectrum ${ \cal S}$ of pairs of values $\bigl[\pam,\pbm\bigr]$ for Legendre pairs $(A,B)$ of length $\ell = 3 \cdot m$ \;  
 \caption{Determination of the spectrum ${\cal S}$ }
 \label{algo.pam_pbm_spectrum}
\end{algorithm}

\begin{example}\label{ex.pam_pbm_117}
We illustrate Algorithm~\ref{algo.pam_pbm_spectrum} with the case $\ell = 117 = 3 \cdot 39$, i.e., $m=39$.
First, we have $\pam + \pbm = 2 \cdot 117 + 2 = 236$ and in addition $\pam \equiv 4 \,\,(\mod 12)$ and $\pbm \equiv 4 \,\,(\mod 12)$. Given that every pair of values $\bigl[\pam,\pbm\bigr]$ determines the values of $A_1^2 + A_2^2 + A_3^2$ and $B_1^2 + B_2^2 + B_3^2$ via (\ref{Equations:3squares}), we obtain Table~\ref{tab:spectrum}.
The first row of Table~\ref{tab:spectrum} indicates a reason why a certain 
$\bigl[\pam,\pbm\bigr]$ combination can be discarded. 
The last three rows of Table~\ref{tab:spectrum} indicate the only three $\bigl[\pam,\pbm\bigr]$ combinations that can possibly hold. 
The remaining rows of the table corresponding to all other $\bigl[\pam,\pbm\bigr]$ combinations are omitted. The omitted rows do not lead to compatible assignments for $A_1,A_2,A_3$ and/or $B_1,B_2,B_3$. Only $3$ pairs of values are not ruled out to occur in Legendre pairs of length $117$. This allows us to add an additional layer of parallelism when searching for such Legendre pairs. 
\end{example}

\begin{table}
$$
\begin{array}{l|l@{}}
\bigl[\pam,\pbm\bigr] & \\
\hline\rule{0pt}{12pt}
% (1,235)  & A_1^2 + A_2^2 + A_3^2 = 1, \leadsto \mbox{ no all-odd solutions} \\
%         & B_1^2 + B_2^2 + B_3^2 = 157,  \\
% \hline\rule{0pt}{12pt}
(4,232) & A_1^2 + A_2^2 + A_3^2 = 3, \leadsto [1, 1, 1] \\
        & B_1^2 + B_2^2 + B_3^2 = 155, \leadsto [3, 5, 11], [5, 7, 9] \leadsto \mbox{no compatible assignments} \\ 
\hline\rule{0pt}{12pt}
(28,208)  & A_1^2 + A_2^2 + A_3^2 = 19, \leadsto [1, 3, 3] \\
        & B_1^2 + B_2^2 + B_3^2 = 139, \leadsto [3, 3, 11], [3, 7, 9] \\
        & \mbox{compatible assignments: }
        (A_1,A_2,A_3) = (1,-3,3), (B_1,B_2,B_3) = (3,7,-9) \\
\hline\rule{0pt}{12pt}
(64, 172)  & A_1^2 + A_2^2 + A_3^2 = 43, \leadsto [3, 3, 5] \\
           & B_1^2 + B_2^2 + B_3^2 = 115, \leadsto [3, 5, 9] \\
           & \mbox{compatible assignments: }
           (A_1,A_2,A_3) = (3,3,-5), (B_1,B_2,B_3) = (-3,-5,9) \\
\hline\rule{0pt}{12pt}
(112, 124)  & A_1^2 + A_2^2 + A_3^2 = 75, \leadsto [1, 5, 7], [5, 5, 5] \\
            & B_1^2 + B_2^2 + B_3^2 = 83, \leadsto [1, 1, 9], [3, 5, 7] \\
            & \mbox{compatible assignments: }
           (A_1,A_2,A_3) = (-1,-5,7), (B_1,B_2,B_3) = (3,5,-7) \\
\hline
\end{array}
$$
\caption{Some computations using Algorithm~\ref{algo.pam_pbm_spectrum} for the spectrum of $\bigl[\pam,\pbm\bigr]$ for $m=39$.}
\label{tab:spectrum}
\end{table}

If $H$ is a subgroup of $\Z_{\ell}^{\star}$ of size~$3$, then $H$ is cyclic and all of its
elements must be $1\,(\mod 3)$. Moreover, if $H$ is a subgroup of $\Z_{\ell}^{\star}$ with
all its members equal to $1\,(\mod 3)$, then each orbit of~$H$ consists of elements that are equal to each other $(\mod 3)$. The consequence is that each orbit contributes to exactly one of the three quantities $A_1,A_2,A_3$ that were defined in Lemma~\ref{Lemma:PSD_length_divisible_by_three}. We can exploit this observation to further confine the potential values for $\pam$ and $\pbm$. The following algorithm is formulated under the assumption that the chosen orbits indicate the positions of the $+1$'s, but since $\pam$ is a sum-of-squares in the $A_i$, it works also in situations where the chosen orbits mark the positions of the $-1$'s.

\begin{algorithm}[H]
\SetAlgoLined
Input: An odd positive integer $\ell = 3 \cdot m$, a subgroup $H$ of $\Z_{\ell}^{\star}$ s.t. $h\equiv 1\,(\mod 3)$ for all $h\in H$, and non-negative integers $c_1,\dots,c_t$ indicating the number of chosen orbits of sizes $s_1,\dots,s_t$, respectively \;
\For{$i=1,\dots,t$}{\For{$j\in\{0,1,2\}$}{
  $n_{i,j} = $ number of orbits of size $s_i$ with elements $\equiv j\,(\mod 3)$ \;
}}
$T = \bigl\{ (k_{1,1},\dots,k_{t,1},k_{1,2},\dots,k_{t,2}) \mathrel{\big|}$ \\ 
  \qquad\quad $0\leq k_{1,1}\leq\min\{c_1,n_{1,1}\},\dots,
    0\leq k_{t,1}\leq\min\{c_t,n_{t,1}\},$ \\
  \qquad\quad $0\leq k_{1,2}\leq\min\{c_1,n_{1,2}\},\dots,
    0\leq k_{t,2}\leq\min\{c_t,n_{t,2}\},$ \\
  \qquad\quad $0\leq c_1-k_{1,1}-k_{1,2}\leq n_{1,0},\dots,
    0\leq c_t-k_{t,1}-k_{t,2}\leq n_{t,0}
\bigr\}$ \;
${\cal C} = \{\}$  \;
\For{$(k_{1,1},\dots,k_{t,1},k_{1,2},\dots,k_{t,2})\in T$}{
    $A_1 = -m + 2\cdot\sum_{i=1}^t s_i\cdot k_{i,1}$ \;
    $A_2 = -m + 2\cdot\sum_{i=1}^t s_i\cdot k_{i,2}$ \;
    $A_3 = -m + 2\cdot\sum_{i=1}^t s_i\cdot(c_i-k_{i,1}-k_{i,2})$ \;
    $P = A_1^2 + A_2^2 + A_3^2 - A_1 A_2 - A_1 A_3 - A_2 A_3$ \;
    ${\cal C} = {\cal C} \cup \{P\}$ \;
}
Output: the set ${ \cal C}$ of potential values for $\pam$ that are compatible with the choice of $c_1,\dots,c_t$ orbits of~$H$ \;  
\caption{Determination of PSD values $\pam$ compatible with the $H$ orbits}
\label{algo.pam}
\end{algorithm}

\begin{example}\label{ex.pam_117_H1}
Continuing Example~\ref{ex.pam_pbm_117} for $\ell=117$, we apply Algorithm~\ref{algo.pam} in order to show that $\bigl[\pa{39},\pb{39}\bigr]=[112,124]$ cannot appear when we employ the subgroup $H_1=\{1,16,22\}$ for conducting a search with the orbits method. The subgroup $H_1$ induces $2$ orbits of size $s_1=1$, and $38$ orbits of size $s_2=3$. For such a search one may choose $c_1=2$ orbits of size~$1$ and $c_2=19$ orbits of size~$3$. By looking at the orbits (they are listed explicitly below in Section~\ref{sec:117H1}), we find $n_{2,0}=12$ orbits whose elements are divisible by~$3$, and similarly $n_{2,1}=n_{2,2}=13$. Moreover, $n_{1,0}=2$ and $n_{1,1}=n_{1,2}=0$, which eventually implies $k_{1,1}=k_{1,2}=0$. Let $k_{2,1}$ (resp.~$k_{2,2}$) denote the number of chosen $3$-orbits whose elements are $1\,(\mod 3)$ (resp.~$2\,(\mod 3)$). Then we obtain
\[
  A_1 = 6\cdot k_{2,1} - 39,\quad
  A_2 = 6\cdot k_{2,2} - 39,\quad
  A_3 = 2\cdot2 + 6\cdot(19 - k_{2,1} - k_{2,2}) - 39.
\]
Letting $k_{2,1}$ and $k_{2,2}$ range over all admissible values, i.e.,
\[
  0\leq k_{2,1}\leq 13 \;\land\; 0\leq k_{2,2}\leq 13 \;\land\;
  0\leq 19-k_{2,1}-k_{2,2}\leq 12,
\]
we get all possible values for $A_1,A_2,A_3$ and determine the potential values
\[
  \pa{39} = \PSD(A,39) = A_1^2 + A_2^2 + A_3^2 - A_1 A_2 - A_1 A_3 - A_2 A_3.
\]
to be
\[
  28,\, 64,\, 100,\, 172,\, 208,\, 244,\, 316,\, 388,\, 496, \dots,
  4132,\, 4348,\, 4564.
\]
This list excludes the possibility of finding a Legendre pair with $\bigl[\pa{39},\pb{39}\bigr]=[112,124]$. Note that this result does not change if we set $c_1=1$ and $c_2=19$ (a situation where the chosen orbits indicate the positions of the $-1$'s).
\end{example}

\section{Compression and constant-PAF sequences}
The term ``constant-PAF sequences'' is taken to mean sequences all of whose PAF values are equal to the same constant. We refer the reader to \cite{DK:DCC:2015} for the definition and properties of compression of Legendre pairs. 
It has been observed experimentally in the current paper, as well as in \cite{JT:2021}, that some Legendre pairs of composite length $\ell = n \cdot m$ have the properties that: 
\begin{itemize}
    \item their $m$-compression is made up from two constant-PAF sequences of length $n$ 
    \item some of the PSD values of the resulting Legendre pairs of length $\ell$ are integers.
\end{itemize}
In this section, we prove a proposition that elucidates the connection between these two aforementioned facts. 

\begin{proposition}
Let $A=[a_1,\ldots,a_{\ell}]$ and $B=[b_1,\ldots,b_{\ell}]$ be a Legendre pair 
of composite length $\ell = n \cdot m$. Let $\mathcal{A} = [A_1,\ldots,A_n]$, 
$\mathcal{B} = [B_1,\ldots,B_n]$, where
\[
  A_j = \sum_{i=0}^{m-1} a_{ni+j}
  \quad\text{and}\quad
  B_j = \sum_{i=0}^{m-1} b_{ni+j}
\]
for $j=1,\ldots,n$, i.e. $(\mathcal{A},\mathcal{B})$ is the $m$-compression of $(A,B)$. \\ 
If the $m$-compression of $(A,B)$ is made up from two constant-PAF sequences of length $n$: 
$$  
    \PAF(\mathcal{A},1) = \PAF(\mathcal{A},2) = \cdots = \PAF(\mathcal{A},\frac{n-1}{2}) 
$$
$$
    \PAF(\mathcal{B},1) = \PAF(\mathcal{B},2) = \cdots = \PAF(\mathcal{B},\frac{n-1}{2})
$$
(where $ \PAF(\mathcal{A},1) + \PAF(\mathcal{B},1) = (-2) \cdot m$),
then the PSD values at integer multiples of $m$ of $A$ and $B$ are integers, with the explicit evaluations
$$
\PSD(A,m \cdot s) = p_2(\mathcal{A}) - \PAF(\mathcal{A},1), \quad s = 1,2,\ldots,\frac{n-1}{2}
$$
$$
\PSD(B,m \cdot s) = p_2(\mathcal{B}) - \PAF(\mathcal{B},1), \quad s = 1,2,\ldots,\frac{n-1}{2}
$$
(where $\PSD(A,m \cdot s) + \PSD(B,m \cdot s) = 2\cdot \ell +2$). 
\label{Proposition:integerPSD}
\end{proposition}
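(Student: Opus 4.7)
The plan is to (i) reduce the PSD of $A$ at multiples of $m$ to the PSD of the compressed sequence $\mathcal{A}$, and then (ii) apply the Wiener--Khinchin-type identity relating PSD and PAF to collapse, under the constant-PAF hypothesis, the right-hand side to the claimed closed form.

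\emph{Step 1: Compression is compatible with the DFT at multiples of $m$.} Let $\omega=e^{2\pi i/\ell}$ and $\zeta=\omega^m=e^{2\pi i/n}$, a primitive $n$-th root of unity. Write the index $i-1$ as $nk+(j-1)$ with $0\leq k\leq m-1$ and $1\leq j\leq n$; since $\zeta^{n}=1$, the exponential $\zeta^{s(nk+j-1)}$ reduces to $\zeta^{s(j-1)}$. Summing first over $k$ collapses the inner sum to $A_j$, yielding
\[
  \DFT(A,ms)=\sum_{j=1}^{n}A_j\,\zeta^{s(j-1)}=\DFT(\mathcal{A},s),
\]
and hence $\PSD(A,ms)=\PSD(\mathcal{A},s)$. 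The analogous identity holds for $B$.

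\emph{Step 2: Wiener--Khinchin relation for $\mathcal{A}$.} Expanding $\PSD(\mathcal{A},s)=\DFT(\mathcal{A},s)\overline{\DFT(\mathcal{A},s)}$ and introducing the shift variable $t:=j-k\pmod{n}$ gives
\[
  \PSD(\mathcal{A},s)=\sum_{t=0}^{n-1}\PAF(\mathcal{A},t)\,\zeta^{st}.
\]
This is a standard identity; I would include one line of derivation for completeness.

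\emph{Step 3: Exploit the constant-PAF hypothesis.} The symmetry $\PAF(\mathcal{A},t)=\PAF(\mathcal{A},n-t)$ together with the assumption $\PAF(\mathcal{A},1)=\cdots=\PAF(\mathcal{A},(n-1)/2)$ forces $\PAF(\mathcal{A},t)=\PAF(\mathcal{A},1)$ for every $t\in\{1,\ldots,n-1\}$. Using also $\PAF(\mathcal{A},0)=p_2(\mathcal{A})$, the Step~2 formula becomes
\[
  \PSD(\mathcal{A},s)=p_2(\mathcal{A})+\PAF(\mathcal{A},1)\sum_{t=1}^{n-1}\zeta^{st}.
\]
For $1\leq s\leq n-1$ we have $\zeta^{s}\neq 1$, so $\sum_{t=0}^{n-1}\zeta^{st}=0$, i.e.\ $\sum_{t=1}^{n-1}\zeta^{st}=-1$. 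Substituting yields $\PSD(\mathcal{A},s)=p_2(\mathcal{A})-\PAF(\mathcal{A},1)$, which is manifestly an integer since $\mathcal{A}\in\Z^{n}$. Combined with Step~1 this proves the formula for $\PSD(A,ms)$ on the stated range $1\leq s\leq (n-1)/2$, and the same argument applied to $B$ gives the companion identity.

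\emph{Step 4: Verify the parenthetical consistency conditions.} The identity $\PAF(A,1)+\PAF(B,1)=-2$ applied to the compression (where each PAF of $\mathcal{A}$ equals a sum of $m$ contributions from $A$ by the standard compression formula, see \cite{DK:DCC:2015}) gives $\PAF(\mathcal{A},1)+\PAF(\mathcal{B},1)=-2m$; and $p_2(\mathcal{A})+p_2(\mathcal{B})=2\ell+2-\bigl(\PAF(\mathcal{A},1)+\PAF(\mathcal{B},1)\bigr)$ after using the Legendre-pair identity $\PSD(A,ms)+\PSD(B,ms)=2\ell+2$, confirming the stated sum.

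The only subtle step is Step~1: one must carefully re-index the DFT sum and observe that $\zeta=\omega^{m}$ is exactly a primitive $n$-th root of unity, so that the compression $\mathcal{A}$ emerges naturally as the inner sum. Everything else is a direct, short computation.
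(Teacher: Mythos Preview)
Your proof is correct and follows essentially the same route as the paper's: reduce $\PSD(A,ms)$ to $\PSD(\mathcal{A},s)$ via compression invariance, apply the Wiener--Khinchin identity $\PSD(\mathcal{A},s)=\sum_{t=0}^{n-1}\PAF(\mathcal{A},t)\,\zeta^{st}$, then use the constant-PAF hypothesis together with $\sum_{t=1}^{n-1}\zeta^{st}=-1$ to collapse to $p_2(\mathcal{A})-\PAF(\mathcal{A},1)$. The only differences are expository: the paper cites \cite{DK:DCC:2015} and \cite{FGS:2001} for the compression invariance and Wiener--Khinchin steps, whereas you derive them from scratch (your Step~1 and Step~2), and you make explicit the symmetry $\PAF(\mathcal{A},t)=\PAF(\mathcal{A},n-t)$ needed to extend constancy from $1\leq t\leq(n-1)/2$ to all $1\leq t\leq n-1$, which the paper leaves implicit.
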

\begin{proof}
We use the fact that the PSD remains invariant under $m$-compression, see \cite{DK:DCC:2015}. We also use the Wiener-Khinchin theorem, see \cite{FGS:2001}, that states that the PSD of a sequence is equal to the DFT of its periodic autocorrelation function. We also use the fact that certain sums of roots of unity vanish identically. For every $s = 1,2,\ldots,(n-1)/2$ and $\omega$ the primitive $n$-th root of unity we have:
$$
\begin{array}{lcl}
\PSD(A,m \cdot s) & = & \PSD(\mathcal{A},s)  \\
                  & = & 
\displaystyle\sum_{j=0}^{n-1} \PAF(\mathcal{A},j) \, \omega^{js}   \\
                  & = & 
\PAF(\mathcal{A},0) + \PAF(\mathcal{A},1) \left( \displaystyle\sum_{j=1}^{n-1} \omega^{js} \right) \\
                  & = & p_2(\mathcal{A}) -\PAF(\mathcal{A},1).
\end{array}
$$
The assertion $\PSD(B,m \cdot s) = p_2(\mathcal{B}) - \PAF(\mathcal{B},1)$ is proved in a completely analogous manner.
% Without loss of generality, we assume that $s=1$ and note that
%$\DFT(A,m)$ and $\DFT(B,m)$ are linear combinations 
%of $\omega^0$, $\omega^{1\cdot m}$, $\ldots$, %$\omega^{(n-1) \cdot m}$, where $\omega = e^{2\pi %i/\ell}$ is the principal $\ell$-th root of unity. More %specifically,
%$$
%    \DFT(A,m) = \displaystyle\sum_{s=0}^{nm-1} a_{s+1} %\omega^{s\cdot m} = 
%    A_1\,\omega^0 + A_2\,\omega^{1\cdot m} + \cdots + %A_n\,\omega^{(n-1)\cdot m}  
%$$
%and
%$$
%    \DFT(B,m) = \displaystyle\sum_{s=0}^{nm-1} b_{s+1} %\omega^{s\cdot m} = 
%    B_1\,\omega^0 + B_2\,\omega^{1\cdot m} + \cdots + %B_n\,\omega^{(n-1)\cdot m} . 
%$$
\end{proof}

We remark that the roles of $n$ and $m$ in Proposition~\ref{Proposition:integerPSD} are not interchangeable. Proposition~\ref{Proposition:integerPSD} will be illustrated in the next section. 

\section{Computational results}
\label{sec:comp}

%%%%%%%%%%%%%%%%%%%%%%%%%%%%%%%%%%%%%%%%%%%%%%%%%%%
%%%%%%%%%%%%%%%%%%%%%%%%%%%%%%%%%%%%%%%%%%%%%%%%%%%

We have implemented the systematic traversal of the search space in the {\tt C} language, gaining (not surprisingly) a considerable speed-up compared to our prototype implementations in Maple and Mathematica. For each sequence~$A$ in the search space, we first apply Lemma~\ref{Lemma:PSD_length_divisible_by_three} (provided that $\ell\equiv0\,\,(\mod 3)$), by computing the sums $A_1,A_2,A_3$ and then $\PSD(A,{\ell}/{3})$ in exact arithmetic. If a sequence passes this test (or if $\ell\not\equiv0\,\,(\mod 3)$), our program continues with the full PSD test, i.e., it checks whether $\PSD(A,k)\leq2\ell+2$ for all $1\leq k\leq(\ell-1)/2$ (note that we can exploit early termination here). The DFT is computed in floating point arithmetic using double precision. For each sequence passing this second test, we write the two sequences $\bigl(\PSD(A,k)\bigr)_{k\in I}$ and $\bigl(2\ell+2-\PSD(A,k)\bigr)_{k\in I}$ with $I=\{1,\dots,(\ell-1)/2\}\setminus\{{\ell}/{3}\}$ into an output file. Since the PSD values are floating point numbers, we convert them to integers and, in order to save disk space, hash them modulo 16. The results are then saved as hexadecimal strings of length~$|I|$. A Legendre pair corresponds to two lines in the output file whose two strings match pairwise (but in reverse order). Due to the hashing there is the possibility to find matches which do not correspond to Legendre pairs, but the probability that this happens is negligible and such false candidates can easily be sorted out in a post-processing step.

All times were measured on RICAM's computing cluster \texttt{radon1}, which has 1168 Xeon E5-2630v3 (2.4Ghz) threads. For the reported computations, we employed a moderate parallelization, typically using 16 threads for one task. Since the parallelization is done by splitting the search space into pieces, it scales very well. The reported times are given in CPU hours, i.e., as the sum of the times of each thread.

\subsection{Legendre pairs of length $117$}
\label{sec:117}

We executed Algorithm~\ref{algo.pam_pbm_spectrum} for Legendre pairs of length $\ell = 117 = 3 \cdot 39$ and obtained 
$$
  \bigl[\PSD(A,39), \PSD(B,39)\bigr] \in \bigl\{ [28, 208], [64, 172], [112, 124] \bigr\},
$$
as in Example~\ref{ex.pam_pbm_117}.
There are four subgroups of order 3 in $\Z_{117}^\star$
$$
   H_1 = \{1, 16, 22\},\quad  H_2 = \{1, 40, 79\},\quad
   H_3 = \{1, 55, 100\},\quad H_4 = \{1, 61, 94\} .
$$
In the following subsections we investigate these subgroups separately, by considering only
sequences whose multiplier group contains the respective subgroup.

\subsubsection{Legendre pairs of length $117$ via $H_1$}\label{sec:117H1}
The subgroup $H_1 = \{1, 16, 22\}$ of order 3 of $\Z_{117}^\star$ acts on $\Z_{117}$ and yields 
$38$ orbits of size $3$ and $2$ orbits of size $1$.
We list the $38+2$ orbits of the action of $H_1 = \{1, 16, 22\}$ on $\Z_{117}$ as follows:
$$
\begin{array}{l@{\qquad}l@{\qquad}l}
H_1 \cdot 1 = \{1, 16, 22\}, & H_1 \cdot 2 = \{2, 32, 44\}, & H_1 \cdot 3 = \{3, 48, 66\}, \\
H_1 \cdot 4 =\{4, 64, 88\}, & H_1 \cdot 5 = \{5, 80, 110\}, & H_1 \cdot 6 = \{6, 15, 96\}, \\
H_1 \cdot 7 = \{7, 37, 112\}, & H_1 \cdot 8 = \{8, 11, 59\}, & H_1 \cdot 9 = \{9, 27, 81\}, \\
H_1 \cdot 10 = \{10, 43, 103\}, & H_1 \cdot 12 = \{12, 30, 75\}, & H_1 \cdot 13 =  \{13, 52, 91\}, \\
H_1 \cdot 14 = \{14, 74, 107\}, & H_1 \cdot 17 = \{17, 23, 38\}, & H_1 \cdot 18 = \{18, 45, 54\}, \\
H_1 \cdot 19 = \{19, 67, 70\}, & H_1 \cdot 20 = \{20, 86, 89\}, & H_1 \cdot 21 = \{21, 102, 111\}, \\
H_1 \cdot 24 = \{24, 33, 60\}, & H_1 \cdot 25 = \{25, 49, 82\}, & H_1 \cdot 26 = \{26, 65, 104\}, \\
H_1 \cdot 28 = \{28, 31, 97\}, & H_1 \cdot 29 = \{29, 53, 113\}, & H_1 \cdot 34 = \{34, 46, 76\}, \\
H_1 \cdot 35 = \{35, 68, 92\}, & H_1 \cdot 36 = \{36, 90, 108\}, & H_1 \cdot 40 = \{40, 55, 61\}, \\
H_1 \cdot 41 = \{41, 71, 83\}, & H_1 \cdot 42 = \{42, 87, 105\}, & H_1 \cdot 47 = \{47, 50, 98\}, \\
H_1 \cdot 51 = \{51, 69, 114\}, & H_1 \cdot 56 = \{56, 62, 77\}, & H_1 \cdot 57 = \{57, 84, 93\}, \\
H_1 \cdot 58 = \{58, 106, 109\}, & H_1 \cdot 63 = \{63, 72, 99\}, & H_1 \cdot 73 = \{73, 85, 115\}, \\
H_1 \cdot 79 = \{79, 94, 100\}, & H_1 \cdot 95 = \{95, 101, 116\}, & \\
H_1 \cdot 39 = \{39\}, & H_1 \cdot 78 = \{78\} . &
\end{array}
$$
Subsequently, we distinguish two cases:
\begin{itemize}
    \item Case (I): make use of $2$ orbits of size $1$ and $19$ orbits of size $3$, to make a subset of size $2 \cdot 1 + 19 \cdot 3 = 59 = (117+1)/{2}$. The search space is of size: $\binom{2}{2} \cdot \binom{38}{19} = 35{,}345{,}263{,}800$.
    \item Case (II): make use of $1$ orbit of size $1$ and $19$ orbits of size $3$, to make a subset of size $1 \cdot 1 + 19 \cdot 3 = 58 = (117-1)/{2}$. The search space is of size: $\binom{2}{1} \cdot \binom{38}{19} = 70{,}690{,}527{,}600$. 
    In order to have sequences whose entries sum up to~$1$, these orbits have to encode the positions of the $-1$'s. 
\end{itemize}

For case (I), we conducted an exhaustive search for Legendre pairs of order $117$ using the subgroup $\{1, 16, 22\}$ in 31 CPU hours. The search yielded $69{,}735{,}984$ candidate sequences passing the PSD test, among them 192 Legendre pairs of lengths $117$ were found. These pairs occur in 48 four-cycles (bipartite complete graphs~$K_{2,2}$): by a four-cycle we mean four sequences $A,B,C,D$ forming four Legendre pairs $(A,B)$, $(B,C)$, $(C,D)$, and $(D,A)$.
However, these 192 Legendre pairs contain some redundancy due to symmetries. Denote by $\sigma$ the cyclic (forward) shift and by $\rho$ the reverting of a sequence, and assume that $(A,B)$ is a Legendre pair. Then also $\bigl(A,\rho^i(\sigma^j(B))\bigr)$ is a Legendre pair for any choice of $i$ and~$j$, because the sequence of PAF values is invariant under shifting and reverting, i.e., $\PAF(B,s)=\PAF(\rho^i(\sigma^j(B)),s)$ for any~$s$. Note that most of these pairs will not be found during this exhaustive search, because they are not compatible with the imposed orbit structure. The only operation that is compatible is $A\mapsto\rho(\sigma(A))$, because the set of orbits is invariant under $i\mapsto\ell-i$. Thus, we can design four Legendre pairs from the four sequences
\[
  A,\quad \rho(\sigma(A)),\quad B,\quad \rho(\sigma(B)).
\]
Ten out of the $192$ Legendre pairs are given below in the form $(A_{k},B_{k}), k=1,\ldots,10$. Moreover, all 10 Legendre pairs of length $117$ shown below, have $117/3=39$-th PSD values equal to $[64,172]$. Among the remaining Legendre pairs of length $117$, some also have $117/3=39$-th PSD values equal to $[28,208]$. Algorithm~\ref{algo.pam} explains why there are no pairs with $[112,124]$, see Example~\ref{ex.pam_117_H1}.
Taking advantage of this property computationally, results in significant gains in CPU time, because we first use this property as a fast filtering mechanism (using exact arithmetic), before applying the computationally expensive full PSD test (using floating-point arithmetic). We also used the PSD constancy property over the orbits, see \cite{DK:DCC:2015}, in order to compute solely one PSD value per orbit.

In the following list, each Legendre pair $(A,B)$ is given by two index sets $I_A$ and $I_B$. The positions~$k$ where the sequence~$A$ equals~$1$, i.e., $a_k=1$, are given by $\bigcup_{i\in I_A} H_1\cdot i$, and the sequence~$A$ equals $-1$ at all other positions. Analogously, the index set $I_B$ encodes the $\{-1,+1\}$-sequence~$B$.
\begin{enumerate}
    \item 
    $I_{A_1} = \{1, 3, 4, 7, 8, 13, 14, 17, 19, 24, 28, 29, 36, 39, 40, 47, 51, 56, 63, 78, 95\}$ \\
    $I_{B_1} = \{2, 5, 7, 9, 13, 14, 18, 19, 20, 24, 34, 36, 39, 40, 42, 47, 56, 58, 73, 78, 79\}$
    \item
    $I_{A_2} = \{1, 4, 8, 10, 12, 18, 20, 29, 34, 35, 36, 39, 40, 47, 56, 57, 58, 63, 73, 78, 95\}$ \\
    $I_{B_2} = \{3, 5, 6, 7, 8, 9, 10, 12, 13, 14, 18, 19, 20, 26, 28, 39, 40, 41, 47, 56, 78\}$
    \item %3
    $I_{A_3} = \{1, 2, 4, 6, 7, 8, 10, 14, 18, 29, 34, 36, 39, 47, 51, 56, 63, 73, 78, 79, 95\}$ \\
    $I_{B_3} = \{2, 3, 5, 7, 10, 12, 13, 14, 20, 24, 26, 28, 34, 36, 39, 40, 41, 47, 56, 63, 78\}$
    \item
    $I_{A_4} = \{2, 3, 6, 7, 9, 19, 21, 26, 29, 34, 39, 40, 41, 47, 56, 58, 63, 73, 78, 79, 95\}$ \\
    $I_{B_4} = \{1, 2, 3, 4, 5, 14, 17, 18, 25, 26, 29, 35, 36, 39, 40, 56, 57, 58, 63, 73, 78\}$ 
    \item
    $I_{A_5} = \{2, 3, 9, 10, 17, 18, 19, 20, 25, 34, 36, 39, 41, 47, 56, 57, 58, 73, 78, 79, 95\}$ \\
    $I_{B_5} = \{1, 2, 4, 8, 9, 13, 14, 17, 21, 26, 29, 39, 40, 42, 56, 57, 58, 63, 73, 78, 95\}$
    \item %6
    $I_{A_6} = \{1, 2, 4, 5, 6, 8, 13, 17, 18, 19, 21, 34, 36, 39, 40, 41, 47, 51, 56, 73, 78\}$ \\
    $I_{B_6} = \{2, 4, 7, 8, 9, 10, 13, 18, 24, 25, 29, 35, 39, 40, 51, 56, 63, 73, 78, 79, 95\}$
    \item
    $I_{A_7} = \{2, 5, 6, 7, 8, 10, 13, 17, 18, 20, 21, 36, 39, 40, 41, 51, 58, 73, 78, 79, 95\}$ \\
    $I_{B_7} = \{3, 4, 5, 7, 10, 14, 17, 18, 26, 28, 29, 35, 36, 39, 40, 41, 57, 63, 78, 79, 95\}$
    \item
    $I_{A_8} = \{3, 4, 5, 7, 8, 18, 21, 24, 25, 28, 29, 34, 39, 40, 41, 42, 47, 56, 73, 78, 95\}$ \\
    $I_{B_8} = \{3, 9, 14, 17, 19, 21, 25, 28, 29, 34, 35, 39, 40, 47, 51, 57, 58, 73, 78, 79, 95\}$
    \item %9
    $I_{A_9} = \{1, 2, 4, 6, 7, 9, 10, 12, 13, 14, 18, 28, 29, 34, 35, 39, 41, 42, 56, 78, 95\}$ \\
    $I_{B_9} = \{5, 6, 8, 9, 10, 13, 14, 19, 20, 25, 28, 34, 36, 39, 41, 51, 56, 58, 63, 73, 78\}$
    \item %10
    $I_{A_{10}} = \{1, 2, 5, 7, 8, 9, 19, 20, 24, 29, 35, 36, 39, 40, 51, 58, 63, 73, 78, 79, 95\}$ \\
    $I_{B_{10}} = \{5, 7, 9, 10, 13, 14, 17, 20, 21, 26, 28, 35, 39, 40, 42, 56, 57, 63, 78, 79, 95\}$
\end{enumerate}
We also list the above 10 Legendre pairs of pairs length $117$ in a more succinct manner, using the lexicographic ranks of the subsets encoding the positions of $+1's$:
$$
\begin{array}{ll}
(10327421105, 25363140085), & (15300082821, 29082145926), \\
(5172847060, 20669267508),  & (21265971921, 810444739),   \\
(22124932714, 6023154169),  & (4370665803, 24003646556),  \\
(24634133277, 27568254144), & (27457918899, 31248697558), \\
(5218049000, 33814036464),  & (6896605532, 34222709639).  \\
\end{array}
$$
More specifically, these are the $19$-element subsets of $\{1,\ldots,38\}$, ranked lexicographically from $0$ to $\binom{38}{19}-1 = 35{,}345{,}263{,}800-1$.
See \cite{Kreher_Stinson} for ranking and unranking algorithms for $k$-element subsets and other useful combinatorial structures. For example, the integer $10327421105$ encodes the subset
\[
  \{1, 3, 4, 7, 8, 12, 13, 14, 16, 19, 22, 23, 26, 27, 30, 31, 32, 35, 38\}
  \subset \{1, \dots, 38\},
\]
which corresponds to $I_{A_1}$ (using the order of the orbits as displayed above).

For case~(II), we conducted an exhaustive search for Legendre pairs of order $117$ using the subgroup $\{1, 16, 22\}$. The search yielded $139{,}471{,}968$ candidate sequences passing the PSD test, among them $768$ Legendre pairs of lengths $117$ were found. These $768$ Legendre pairs occur in $48$ $K_{4,4}$ bipartite graphs. Similar to case (I), we can explain this phenomenon via the underlying symmetries.

Recall the notations $\sigma$ and $\rho$ for the cyclic shift and reverting of a sequence. In case~(I) we chose both $1$-orbits, and therefore all sequences $A$ in the search space satisfied $A_{39} = A_{78} = 1$ and $A_{117} = -1$. In contrast, we choose in case~(II) only one $1$-orbit and hence the sequences in the search space have $(A_{39}, A_{78}, A_{117})$ equal to $(-1,1,1)$ or $(1,-1,1)$. A sequence of the first type ($A_{39}=-1$) can be mapped to one of the second type ($A_{78}=-1$) by $\sigma^{39}$, notably without leaving the search space, because the set of orbits is invariant under $i \mapsto i + 39\; (\mod 117)$. Hence one finds the following four sequences with identical PAF values in the search space of case~(II):
\[
  A,\quad \sigma^{39}(A),\quad \rho(\sigma(A)),\quad \rho(\sigma^{40}(A)).
\]
Combining any of these four sequences with any of the four sequences with complementary PAF sequence forms a Legendre pair. This explains the occurrence of $K_{4,4}$ bipartite graphs.

Also note that for any sequence $A$ with $A_{39}=-1$ in case~(II), we find the sequence $\sigma^{-39}(A)$ in the search space of case~(I).
% This means that by combining the results from both cases, we find a set of $192+384=576$ different sequences,
% which can be partitioned into $96$ classes, each one containing six sequences with identical PAF values.
Ignoring the symmetries, i.e., picking one representative from each class, yields $48$ Legendre pairs which are non-equivalent under shifting and reverting.

\subsubsection{Legendre pairs of length $117$ via $H_2$}
The subgroup $H_2 = \{1, 40,79 \}$ of order 3 of $\Z_{117}^\star$ acts on $\Z_{117}$ and yields $38$ size 1 orbits and $26$ size 3 orbits, which gives a lot of possible combinations to build subsets of size $58$ (or $59$). We have not been able to construct Legendre pairs of length $117$ using $H_2$, possibly because we did not implement an exhaustive search in this case.

\subsubsection{Legendre pairs of length $117$ via $H_3$}
The subgroup $H_3 = \{1, 55, 100 \}$ of order 3 of $\Z_{117}^\star$ acts on $\Z_{117}$ and yields $8$ size 1 orbits and $36$ size 3 orbits, which gives $3$ possible combinations to build subsets of size $59$:
\begin{itemize}
    \item[(a)] $8 \cdot 1 + 17 \cdot 3 = 59$ with search space of size $8{,}597{,}496{,}600$  (2.7 CPU hours); there are $2{,}812{,}308$ sequences passing the PSD test.
    \item[(b)] $5 \cdot 1 + 18 \cdot 3 = 59$ with search space of size $508{,}207{,}576{,}800$  (95 CPU hours); there are $50{,}685{,}120$ sequences passing the PSD test.
    \item[(c)] $2 \cdot 1 + 19 \cdot 3 = 59$ with search space of size $240{,}729{,}904{,}800$  (49 CPU hours); there are $36{,}699{,}600$ sequences passing the PSD test.
\end{itemize}
In addition, there are also $3$ possible combinations to build blocks of size $58$:
\begin{itemize}
    \item[(d)] $7 \cdot 1 + 17 \cdot 3 = 58$ with search space of size $68{,}779{,}972{,}800$  (13.5 CPU hours); there are $10{,}485{,}600$ sequences passing the PSD test.
    \item[(e)] $4 \cdot 1 + 18 \cdot 3 = 58$ with search space of size $635{,}259{,}471{,}000$  (119 CPU hours); there are $63{,}356{,}400$ sequences passing the PSD test.
    \item[(f)] $1 \cdot 1 + 19 \cdot 3 = 58$ with search space of size $68{,}779{,}972{,}800$  (22 CPU hours); there are $22{,}498{,}464$ sequences passing the PSD test.
\end{itemize}
No Legendre pair of length $117$ whose multiplier group
contains $H_3$ exists.

\subsubsection{Legendre pairs of length $117$ via $H_4$}

The subgroup $H_4 = \{1, 61, 94\}$ of order 3 of $\Z_{117}^\star$ acts on $\Z_{117}$ and yields a search space of size: $\binom{2}{2} \cdot \binom{38}{19} = 35{,}345{,}263{,}800$, because there are $38$ orbits of size $3$ and $2$ orbits of size~$1$, and we need $19$ orbits of size $3$ and $2$ orbits of size~$1$ to make a subset of size $19 \cdot 3 + 2 \cdot 1 = 59 = (117+1)/{2}$.
We found $240$ Legendre pairs of length $117$ via an exhaustive search. Some of them are shown below in LexRank form:
$$
\begin{array}{@{}lll@{}}
(8221110983, 12044164377), & 
(12702071296, 15372978390), & 
(23944768832, 15178414396), \\ 
(20338660993, 90051589), &
(7146518669, 23738703053), &
(3073133857, 30770050335), \\
(32540516078, 3097218289), &
(33749219312, 4797783684), &
(5422010999, 7269176966).
\end{array}
$$
Among the $240$ pairs, there are $144$ pairs with $[\PSD(A,39),\PSD(B,39)]$ equal to  $[64,172]$ and $96$ pairs with $[\PSD(A,39),\PSD(B,39)]$ equal to $[28,208]$.

\noindent
A search with block size $58$ delivered $960$ Legendre pairs of length~$117$. Analogous to subgroup $H_1$, any of these pairs can be obtained from others by shifting and reverting one or both sequences. Hence, we can extract $60$ Legendre pairs using~$H_4$ which are non-equivalent under shifting and reverting.

%%%%%%%%%%%%%%%%%%%%%%%%%%%%%%%%%%%%%%%%%%%%%%%%%%%
%%%%%%%%%%%%%%%%%%%%%%%%%%%%%%%%%%%%%%%%%%%%%%%%%%%
\subsection{Legendre pairs of length $129$}
\label{sec:129}

We executed Algorithm~\ref{algo.pam_pbm_spectrum} for Legendre pairs of length $\ell = 129 = 3 \cdot 43$ and obtained that the spectrum of possible pairs of values for $\PSD(A,43)$ and $\PSD(B,43)$ is made up of only 5 pairs:
$$
  \bigl[\PSD(A,43), \PSD(B,43)\bigr] \in
  \bigl\{ [4, 256], [16, 244], [52, 208], [64, 196], [112, 148] \bigr\} .
$$ 
There is one  subgroup of order 3 in $\Z_{129}^\star$
$$
   H = \{ 1, 49, 79 \} 
$$
acting on $\Z_{129}$ and yielding a search space of size
$\binom{2}{2} \cdot \binom{42}{21} = 538{,}257{,}874{,}440$. Since there  
are $42$ orbits of size $3$ and $2$ orbits of size $1$, we need 
$21$ orbits of size $3$ and $2$ orbits of size $1$, to make a subset of 
size $21 \cdot 3 + 2 \cdot 1 = 65 = (129+1)/{2}$.
The $42+2$ orbits of the action of $H = \{1, 49, 79\}$ 
on $\Z_{129}$ are
$$
\begin{array}{l@{\qquad}l@{\qquad}l}
H \cdot 1 = \{1, 49, 79\}, & H \cdot 2 = \{2, 29, 98\}, & H \cdot 3 = \{3, 18, 108\},\\
H \cdot 4 = \{4, 58, 67\}, & H \cdot 5 = \{5, 8, 116\}, & H \cdot 6 = \{6, 36, 87\},\\
H \cdot 7 = \{7, 37, 85\}, & H \cdot 9 = \{9, 54, 66\}, & H \cdot 10 = \{10, 16, 103\},\\
H \cdot 11 = \{11, 23, 95\}, & H \cdot 12 = \{12, 45, 72\}, & H \cdot 13 = \{13, 121, 124\},\\ 
H \cdot 14 = \{14, 41, 74\}, & H \cdot 15 = \{15, 24, 90\}, & H \cdot 17 = \{17, 53, 59\},\\
H \cdot 19 = \{19, 28, 82\}, & H \cdot 20 = \{20, 32, 77\}, & H \cdot 21 = \{21, 111, 126\},\\
H \cdot 22 = \{22, 46, 61\}, & H \cdot 25 = \{25, 40, 64\}, & H \cdot 26 = \{26, 113, 119\},\\
H \cdot 27 = \{27, 33, 69\}, & H \cdot 30 = \{30, 48, 51\}, & H \cdot 31 = \{31, 100, 127\},\\
H \cdot 34 = \{34, 106, 118\}, & H \cdot 35 = \{35, 38, 56\}, & H \cdot 39 = \{39, 105, 114\},\\
H \cdot 42 = \{42, 93, 123\}, & H \cdot 44 = \{44, 92, 122\}, & H \cdot 47 = \{47, 101, 110\},\\
H \cdot 50 = \{50, 80, 128\}, & H \cdot 52 = \{52, 97, 109\}, & H \cdot 55 = \{55, 88, 115\},\\
H \cdot 57 = \{57, 84, 117\}, & H \cdot 60 = \{60, 96, 102\}, & H \cdot 62 = \{62, 71, 125\},\\
H \cdot 63 = \{63, 75, 120\}, & H \cdot 65 = \{65, 89, 104\}, & H \cdot 68 = \{68, 83, 107\},\\
H \cdot 70 = \{70, 76, 112\}, & H \cdot 73 = \{73, 91, 94\}, & H \cdot 78 = \{78, 81, 99\},\\
H \cdot 43 = \{43\}, &  H \cdot 86 = \{86\} . &
\end{array}
$$

We conducted an exhaustive search for Legendre pairs of order $129$
using the subgroup $H = \{ 1, 49, 79 \}$ in 431 CPU hours. The search was done in parallel on 16 threads and yielded output files of total size 80 gigabytes, containing more than $460$ million sequences which passed the PSD test; among them 112 Legendre pairs of length $129$ were found. Analogous to $\ell=117$, this list can be condensed to $28$ pairs, where the remaining ones are obtained by symmetry.

Here are two Legendre pairs of length $129$, given by index sets 
$I_A, I_B$:
\begin{enumerate}
    \item $I_A = \{1, 2, 5, 13, 17, 19, 21, 22, 25, 26, 27, 34, 39, 43, 50, 55, 60, 62, 63, 68, 73, 78, 86\}$ \\
    $I_B = \{1, 3, 11, 12, 13, 17, 21, 26, 31, 34, 35, 42, 43, 47, 50, 52, 57, 60, 62, 68, 70, 78, 86\}$
    \item $I_A = \{1, 2, 5, 13, 17, 19, 21, 22, 25, 26, 27, 34, 39, 43, 50, 55, 60, 62, 63, 68, 73, 78, 86\}$ \\
    $I_B = \{1, 2, 3, 4, 5, 6, 10, 11, 12, 17, 19, 20, 21, 22, 27, 30, 34, 43, 50, 57, 70, 73, 86\}$
\end{enumerate}
Both the above Legendre pairs of length $129$ have $\bigl[\pa{43},\pb{43}\bigr] = [148,112]$. In fact, all the 112 Legendre pairs of length $129$ that we found have this property. This is partly explained by applying Algorithm~\ref{algo.pam}, where
\[
  4,\, 76,\, 112,\, 148,\, 256,\, 292,\, 364,\, 400, \dots
\]
are returned as potential values for $\pa{43}$ and $\pb{43}$, leaving only two possible pairs, namely $[4, 256]$ and $[112, 148]$.

% TODO: find an explanation why these other three Legendre pairs arise 
% and why this doesn't happen in the case of $\ell=117$???

%%%%%%%%%%%%%%%%%%%%%%%%%%%%%%%%%%%%%%%%%%%%%%%%%%%
%%%%%%%%%%%%%%%%%%%%%%%%%%%%%%%%%%%%%%%%%%%%%%%%%%%
\subsection{Legendre pairs of length $147$}
\label{sec:147}

We executed Algorithm~\ref{algo.pam_pbm_spectrum} for Legendre pairs of length $\ell = 147 = 3 \cdot 49$ and obtained that the spectrum of possible pairs of values for $\PSD(A,49)$ and $\PSD(B,49)$ is made up of only 6 pairs
$$
  \bigl[\PSD(A,49), \PSD(B,49)\bigr] \in \bigl\{ [4, 292], [28, 268], [52, 244], [100, 196], [124, 172], [148, 148] \bigr\} .
$$ 
Algorithm~\ref{algo.pam} further shows that in fact only two of the above six pairs (namely the first and the last one) are compatible with the particular orbit structure induced by the subgroup $H$ of order~$3$ in $\Z_{147}^\star$
$$
   H = \{ 1, 67, 79 \} 
$$
acting on $\Z_{147}$. This information prevents us from conducting redundant computations and helps us target the search more narrowly.
The $50$ orbits of the action of $H = \{1, 67, 79\}$ 
on $\Z_{147}$ are
$$
\begin{array}{l@{\qquad}l@{\qquad}l}
H \cdot 1 = \{1, 67, 79\}, & H \cdot 2 = \{2, 11, 134\}, & H \cdot 3 = \{3, 54, 90\} , \\
H \cdot 4 = \{4, 22, 121\}, & H \cdot 5 = \{5, 41, 101\}, & H \cdot 6 = \{6, 33, 108\}, \\
H \cdot 7 = \{7, 28, 112\}, & H \cdot 8 = \{8, 44, 95\}, & H \cdot 9 = \{9, 15, 123\}, \\
H \cdot 10 = \{10, 55, 82\}, & H \cdot 12 = \{12, 66, 69\}, & H \cdot 13 = \{13, 136, 145\}, \\
H \cdot 14 =\{14, 56, 77\}, & H \cdot 16 =\{16, 43, 88\}, & H \cdot 17 =\{17, 20, 110\}, \\
H \cdot 18 =\{18, 30, 99\}, & H \cdot 19 =\{19, 31, 97\}, & H \cdot 21 =\{21, 42, 84\}, \\
H \cdot 23 =\{23, 53, 71\}, & H \cdot 24 =\{24, 132, 138\}, & H \cdot 25 =\{25, 58, 64\}, \\
H \cdot 26 =\{26, 125, 143\}, & H \cdot 27 =\{27, 45, 75\}, & H \cdot 29 =\{29, 32, 86\}, \\
H \cdot 34 =\{34, 40, 73\}, & H \cdot 35 =\{35, 119, 140\}, & H \cdot 36 =\{36, 51, 60\}, \\
H \cdot 37 =\{37, 127, 130\}, & H \cdot 38 =\{38, 47, 62\}, & H \cdot 39 =\{39, 114, 141\}, \\
H \cdot 46 =\{46, 106, 142\}, & H \cdot 48 =\{48, 117, 129\}, & H \cdot 49 =\{49\}, \\
H \cdot 50 =\{50, 116, 128\}, & H \cdot 52 =\{52, 103, 139\}, & H \cdot 57 =\{57, 93, 144\}, \\
H \cdot 59 =\{59, 104, 131\}, & H \cdot 61 =\{61, 115, 118\}, & H \cdot 63 =\{63, 105, 126\}, \\
H \cdot 65 =\{65, 92, 137\}, & H \cdot 68 =\{68, 80, 146\}, & H \cdot 70 =\{70, 91, 133\} \\
H \cdot 72 =\{72, 102, 120\}, & H \cdot 74 =\{74, 107, 113\}, & H \cdot 76 =\{76, 94, 124\}, \\
H \cdot 78 =\{78, 81, 135\}, & H \cdot 83 =\{83, 89, 122\}, & H \cdot 85 =\{85, 100, 109\}, \\
H \cdot 87 =\{87, 96, 111\}, & H \cdot 98 =\{98\}. & \\
\end{array}
$$

Here is a Legendre pair $(A,B)$ of length $\ell = 147$, given by two index sets $I_A$ and $I_B$. The positions~$k$ where the sequence~$A$ equals~$1$, i.e., $a_k=1$, are given by $\bigcup_{i\in I_A} H \cdot i$, and the sequence~$A$ equals $-1$ at all other positions. Analogously, the index set $I_B$ encodes the $\{-1,+1\}$-sequence~$B$:
\begin{align*}
    I_A &= \{ 1, 2, 3, 5, 7, 8, 10, 14, 16, 17, 19, 21, 27, 35, 38, 39, 49, 52, 57, 61, 70, 72, 74, 83, 87, 98  \}, \\
    I_B &= \{ 1, 2, 6, 7, 9, 10, 12, 16, 17, 19, 23, 24, 26, 35, 39, 46, 48, 49, 50, 59, 65, 68, 70, 78, 85, 98 \}.
\end{align*}
The LexRank encoding of the Legendre pair $(A,B)$ of length $\ell = 147$ is
$$
(2279447240326, 6981583007090).
$$
This Legendre pair $(A,B)$ of length $\ell = 147$ has $\bigl[\pa{49}$, $\pb{49}\bigr]=[148,148]$, the second pair of values predicted by Algorithms~\ref{algo.pam_pbm_spectrum} and~\ref{algo.pam}.

We also give three Legendre pairs of length $\ell = 147$ with $\bigl[\pa{49}$, $\pb{49}\bigr]=[4,292]$, the first pair of values predicted by Algorithms~\ref{algo.pam_pbm_spectrum} and~\ref{algo.pam}:

    \begin{enumerate}
        \item $(1685512212865, 3612702197526)$,
        \item $(2926263388957, 265692014998)$,
        \item $(4357037511235, 3728601853735)$.
    \end{enumerate}

We remark that the combination of Algorithms~\ref{algo.pam_pbm_spectrum} and~\ref{algo.pam} was of critical importance, in order to traverse the first portion (15\%) of the huge search space of 32 trillion elements and find the Legendre pairs of length $\ell = 147$. 

%%%%%%%%%%%%%%%%%%%%%%%%%%%%%%%%%%%%%%%%%%%%%%%%%%%
%%%%%%%%%%%%%%%%%%%%%%%%%%%%%%%%%%%%%%%%%%%%%%%%%%%
\subsection{Legendre pairs of length $133$}
\label{sec:133}

We used the subgroup of order $3$, $H = \{1,11,121\}$, acting on $\Z_{133}$. This yields a search space of size
$\binom{44}{22}=2{,}104{,}098{,}963{,}720$ elements. The computation was stopped after $20\%$ of the search space was traversed, in 707 hours of CPU time.
The output files grew to a total size of 108 gigabytes and $5$ new Legendre pairs of length $133$ were discovered (we display their lexicographic rank for a $22$-subset of the $44$ orbits of size~$3$, but this time these indices give the positions of the $-1$'s):
\begin{enumerate}
    \item $(128572618842, 210086022915)$,
    \item $(17644506807, 41167368128)$,
    \item $(179364459458, 27235734754)$, 
    \item $(213277890206, 251235525902)$,
    \item $(272147218211, 279717372516)$.
\end{enumerate}

These five Legendre pairs can be used to make 
Hadamard matrices of order $2 \cdot 133 + 2 = 268$, via the two circulant core template array in \cite{FGS:2001}. The order $268$ was the smallest open order for Hadamard matrices until 1985~\cite{Sawade:HM268:1985}. 

These five Legendre pairs have integer PSD values at integer multiples of the prime factor $19$ of $\ell = 133$. More specifically, using the above numbering we have
\begin{enumerate}
    \item $\bigl[\pa{19},\pb{19}\bigr] = \bigl[\pa{38},\pb{38}\bigr] = \bigl[\pa{57},\pb{57}\bigr] = [176,92]$,
    \item $\bigl[\pa{19},\pb{19}\bigr] = \bigl[\pa{38},\pb{38}\bigr] = \bigl[\pa{57},\pb{57}\bigr] = [92,176]$,
    \item $\bigl[\pa{19},\pb{19}\bigr] = \bigl[\pa{38},\pb{38}\bigr] = \bigl[\pa{57},\pb{57}\bigr] = [36,232]$,
    \item $\bigl[\pa{19},\pb{19}\bigr] = \bigl[\pa{38},\pb{38}\bigr] = \bigl[\pa{57},\pb{57}\bigr] = [92,176]$,
    \item $\bigl[\pa{19},\pb{19}\bigr] = \bigl[\pa{38},\pb{38}\bigr] = \bigl[\pa{57},\pb{57}\bigr] = [92,176]$.
\end{enumerate}

Using Proposition~\ref{Proposition:integerPSD} 
with $\ell = 133, m = 19, n = 7$, we are able to ascertain the cause of this property.

\begin{itemize}
  \item For the 3rd Legendre pair of length $133$ we found, using the notations of Proposition~\ref{Proposition:integerPSD} we have:
  \begin{alignat*}{2}
    \mathcal{A} &= [1, 1, 1, 1, 1, 1, -5],\quad &
    \PAF(\mathcal{A},1) = \PAF(\mathcal{A},2) = \PAF(\mathcal{A},3) & = -5, \\
    \mathcal{B} &= [-1, -1, 5, -1, 5, 5, -11],\quad &
    \PAF(\mathcal{B},1) = \PAF(\mathcal{B},2) = \PAF(\mathcal{B},3) & = -33. 
    \end{alignat*}
    Therefore, by applying Proposition~\ref{Proposition:integerPSD}, we obtain (note that $-5-33 = 2 \cdot (-19) $)
  \begin{align*}
  \PSD(A,19) = \PSD(A,38) = \PSD(A,57) & = p_2(\mathcal{A}) - \PAF(\mathcal{A},1) = 31 + 5 = 36, \\ 
  \PSD(B,19) = \PSD(B,38) = \PSD(B,57) & = p_2(\mathcal{B}) - \PAF(\mathcal{B},1) = 199 + 33 = 232.
  \end{align*}

  \item For the 5th Legendre pair of length $133$ we found, using the notations of Proposition \ref{Proposition:integerPSD} we have:
  \begin{alignat*}{2}
  \mathcal{A} &= [1, 1, -3, 1, -3, -3, 7],\quad &
  \PAF(\mathcal{A},1) = \PAF(\mathcal{A},2) = \PAF(\mathcal{A},3) &= -13, \\
  \mathcal{B} &= [-5, -5, 5, -5, 5, 5, 1],\quad &
  \PAF(\mathcal{B},1) = \PAF(\mathcal{B},2) = \PAF(\mathcal{B},3) &= -25.
  \end{alignat*}
  Therefore, by applying Proposition~\ref{Proposition:integerPSD}, we obtain (note that $-13-25 = 2 \cdot (-19) $)
  \begin{align*}
  \PSD(A,19) = \PSD(A,38) = \PSD(A,57) & = p_2(\mathcal{A}) - \PAF(\mathcal{A},1) = 79 + 13 = 92,\\
  \PSD(B,19) = \PSD(B,38) = \PSD(B,57) & = p_2(\mathcal{B}) - \PAF(\mathcal{B},1) = 151 + 25 = 176.
  \end{align*}

\end{itemize}

\section{Conclusion}
We prove a proposition that connects constant-PAF sequences and the corresponding Legendre pairs with integer PSD values.
We update the list of open lengths for Legendre pairs, in \cite{BD:2020}. In particular, we furnish the first ever examples of Legendre pairs of the four open lengths  $117, 129, 133, 147$. In the case of the three open lengths $117, 129, 147$, we make extensive use of two new algorithms. Our first algorithm yields the determination of the complete spectrum of the (resp. $39$-th, $43$-rd, $49$-th) value of the discrete Fourier transform for Legendre pairs. In fact, our algorithm yields the complete spectrum of the $({\ell}/{3})$-rd value of the discrete Fourier transform for Legendre pairs of lengths $\ell \equiv 0 \,\, (\mod 3)$. Our second algorithm exploits the particular orbit structure induced by specific subgroups of their multiplier group, to disqualify certain elements of the spectrum determined by the first algorithm. The combination of both algorithms for Legendre pairs of lengths $\ell \equiv 0 \, (\mod 3)$ was a decisive factor in discovering Legendre pairs of the three open lengths $117, 129, 147$.        
A Legendre pair of length $\ell = 77$ was reported in 2020 in \cite{JT:2020}, see \cite{JT:2021}. Therefore, the state-of-the-art list of twelve integers in the range $ < 200$ for which the question of existence of Legendre pairs is still unsolved is
$$
85, 87, 115, 145, 159, 161, 169, 175, 177, 185, 187, 195.
$$
For $\ell = 87$, the order-$7$ subgroup $\{1, 7, 16, 25, 49, 52, 82\}$, and both subgroups of order~$4$ (namely, $\{1, 17, 28, 41\}$ and $\{1, 28, 46, 70\}$) did not yield any solutions by exhaustive search. We also tried some of the subgroups of order~$2$ (there are three of them: $\{1,28\},\{1,59\},\{1,86\}$), which admit many possible combinations of their associated subsets, some of whose search spaces being beyond our computational resources, but without success. Therefore, even twenty years after the fundamental paper \cite{FGS:2001} for Legendre pairs appeared, there are still interesting questions and open problems to ponder in this area.

\paragraph{Acknowledgment} We are grateful for the detailed and pertinent referee comments that contributed to improving our paper. In particular, one of the referees provided a generalized version of our original Proposition~\ref{Proposition:integerPSD} and a significantly simplified version of its proof.

%%%%%%%%%%%%%%%%%%%%%%%%%%%%%%%%%%%%%%%%%%%%%%%%%%%%%%%%%%%%%%%%%%%%%%%
%%%%%%%%%%%%%%%%%%%%%%%%%%%%%%%%%%%%%%%%%%%%%%%%%%%%%%%%%%%%%%%%%%%%%%%
%%%%%%%%%%%%%%%%%%%%%%%%%%%%%%%%%%%%%%%%%%%%%%%%%%%%%%%%%%%%%%%%%%%%%%%

\bibliographystyle{plain}
\bibliography{Legendre_pairs}

\end{document}